\documentclass{article}
\usepackage{anyfontsize}
\usepackage[margin=1.25in]{geometry}
\usepackage{hyperref}
\usepackage{times,mathrsfs,mathtools,amsmath,amssymb}
\makeatletter

\makeatother
\usepackage{amsthm}
\usepackage{stix}
\usepackage{titlesec}

\usepackage{parskip}

\begin{document}

\newtheorem{theorem}{\textsc{Theorem}}[section]
\newtheorem{problem}[theorem]{\textsc{Problem}}
\newtheorem{exercise}{\textsc{Exercise}}[section]
\newtheorem{proposition}[theorem]{\textsc{Proposition}}
\newtheorem{lemma}[theorem]{\textsc{Lemma}}
\newtheorem{corollary}[theorem]{\textsc{Corollary}}
\newtheorem{definition}[theorem]{\textsc{Definition}}
\newtheorem{remark}[theorem]{\rm \textsc{Remark}}
\newtheorem{example}[theorem]{\rm \textsc{Example}}

\renewcommand{\qedsymbol}{$\blacksquare$}

\newcommand{\bb}{\mathbb}
\newcommand{\mc}{\mathcal}
\renewcommand{\bf}{\mathbf}
\renewcommand{\bar}{\overline}
\renewcommand{\Re}{\text{Re}\,}
\renewcommand{\Im}{\text{Im}\,}
\newcommand{\im}{\text{im}\,}
\newcommand{\wtilde}{\widetilde}
\newcommand{\what}{\widehat}
\newcommand{\rhu}{\rightharpoonup}
\newcommand{\la}{\langle}
\newcommand{\ra}{\rangle}
\renewcommand{\r}{\right}
\renewcommand{\l}{\left}
\newcommand{\ind}{\text{ind}}
\newcommand{\res}{\text{Res}}
\newcommand{\bs}{\boldsymbol}
\newcommand{\tx}{\text}
\renewcommand{\v}{\tx{\bf v}}
\renewcommand{\u}{\tx{\bf u}}
\newcommand{\n}{\tx{\bf n}}
\newcommand{\w}{\tx{\bf w}}
\renewcommand{\div}{\text{div}\,}
\newcommand{\bdot}{\bs{\cdot}}
\newcommand{\on}{\operatorname}
\newcommand{\hooklongrightarrow}{\lhook\joinrel\longrightarrow}

\setcounter{section}{0}

\title{A Unique Continuation Property for the Level Set Equation}
\author{Nick Strehlke\footnote{This research was partially supported by NSF Grant DMS 1404540.}}
\date{}

\maketitle

\begin{abstract}
We prove the following unique continuation result: if a solution to the level set equation for mean curvature flow in a mean-convex domain agrees to infinite order at the point where it attains its maximum with the solution for a ball, then it agrees everywhere and the domain is a ball.
\end{abstract}

\section{Introduction}

Let $\Omega$ be a mean-convex domain in $\mathbb R^{n+1}.$ The level set equation for mean curvature flow is a degenerate elliptic boundary value problem asking for a function $t\colon \Omega \to\mathbb R$ satisfying $t=0$ on $\partial \Omega$ and
\begin{align}
	|\nabla t| \on{div}\left(\frac{\nabla t}{|\nabla t|}\right) = -1. \label{lse}
\end{align}
The solution $t$ to this problem exists and is unique and twice differentiable on $\Omega.$ It is sometimes called the \emph{arrival time} for mean curvature flow on the domain $\Omega,$ for the following reason: Let $T = \max_{x\in\Omega} t(x)$ and for $\tau\in [0,T)$ define the hypersurface $M_\tau$ by
\begin{align*}
	M_\tau &= \{x\in \Omega\colon t(x) = \tau\}.
\end{align*}
Then the $1$-parameter family $\{M_\tau\}_{\tau\in [0,T)}$ of surfaces is a mean curvature flow: the position vector $x(\tau)$ of $M_\tau$ satisfies the equation
\begin{align*}
	\frac{\mathrm d x}{\mathrm d\tau} \bdot \bs N(x,\tau) &= - H(x,\tau),
\end{align*}
where $\bdot$ is the inner product in $\mathbb R^{n+1}$ and $H$ and $\bs N$ are the scalar mean curvature and outer unit normal, respectively, of the hypersurface $M_\tau.$\footnote{This point of view for studying mean curvature flow was first taken in a computational context by Osher and Sethian, \cite{OsSe88}, and the theory was then developed in \cite{ChGi91} and \cite{EvSp91}.}

When $\Omega = B_r(x_0)$ is the ball of radius $r$ centered at the point $x_0$ in $\mathbb R^{n+1},$ the solution $t_B$ to (\ref{lse}) is
\begin{align}
	t_B(x) &= \frac{r^2}{2n} - \frac{|x-x_0|^2}{2n} \label{lse-ball}
\end{align}
and the corresponding mean curvature flow is a family of homothetically shrinking spheres. The main result is that a solution to the level set equation (\ref{lse}) on a mean-convex domain $\Omega$ that attains its maximum at $x_0$ and agrees to infinite order at the point $x_0$ with the solution $t_B$ to the level set equation for a ball centered at $x_0$ must actually coincide with $t_B$ everywhere.\footnote{A different and more complicated \emph{parabolic} unique continuation property for self-shrinkers was recently proved by Jacob Bernstein in \cite{Be17}.}

\medskip
\begin{theorem}
	\label{unique-cont} Suppose $\Omega\subset\mathbb R^{n+1}$ is a mean-convex domain and $t\colon \Omega \to\mathbb R$ solves (\ref{lse}) and attains its maximum $T$ at the point $x_0\in \Omega.$ If
	\begin{align}
		t(x) = T - \frac{|x-x_0|^2}{2n} +O\left( |x-x_0|^N\right) \label{asympt}
	\end{align}
	as $x\to x_0,$ for every integer $N> 2,$ then $t(x) = T - |x - x_0|^2/(2n)$ and $\Omega = B_{(2nT)^{1/2}}(x_0)$ is the ball of radius $(2n T)^{1/2}$ centered at $x_0.$
\end{theorem}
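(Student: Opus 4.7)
The plan is to translate the infinite-order hypothesis at $x_0$ into a super-exponentially fast convergence of the rescaled mean curvature flow to a round sphere, and then to use a parabolic backward-uniqueness argument to force the flow to be exactly spherical. First I would derive the PDE satisfied by $u := t - t_B$ where $t_B(x) = T - |x-x_0|^2/(2n)$. Writing $t = t_B + u$ and using $\nabla t_B = -(x-x_0)/n$ and $\nabla^2 t_B = -I/n$, the identity $(I - \nu\nu^T) : (-I/n) = -1$, valid for \emph{any} unit vector $\nu$, causes the nonlinearity of the level set equation to cancel and reduces (\ref{lse}) exactly to
\begin{align*}
	\Delta u - \frac{(\nabla^2 u)(\nabla t, \nabla t)}{|\nabla t|^2} = 0, \qquad \text{i.e.,} \qquad (I - \nu\nu^T) : \nabla^2 u = 0, \quad \nu = \nabla t/|\nabla t|.
\end{align*}
Geometrically this is the tangential Laplacian of $u$ on the level sets of $t$ with a mean-curvature drift; it is nonlinear only because $\nu$ depends on $\nabla u$.

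Next I would pass to the rescaled mean curvature flow near $x_0$. Set $s = -\tfrac{1}{2}\log(T - \tau)$ and $y = e^s(x - x_0)/\sqrt{2n}$, so that the spherical solution $t_B$ produces the static unit sphere $\Sigma = S^n$. For $s$ large the rescaled level sets $\wtilde M_s$ are normal graphs
\begin{align*}
	\wtilde M_s = \{(1 + h(\omega,s))\omega : \omega \in S^n\},
\end{align*}
and $h$ satisfies a fully nonlinear parabolic equation $\partial_s h = L h + Q(h,\nabla h, \nabla^2 h)$, with $L$ the Jacobi operator of $\Sigma$ as a self-shrinker, whose spherical-harmonic eigenvalues are $\lambda_\ell = 1 - \ell(\ell+n-1)/(2n)$, and $Q$ at least quadratic. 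The hypothesis (\ref{asympt}) translates, after rescaling, into the statement that $\|h(\cdot,s)\|_{C^k(S^n)}$ decays faster than every exponential $e^{-Ns}$ as $s \to \infty$, for every $k$ and $N$.

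The main step is to show that this super-exponential decay forces $h \equiv 0$. The positive modes $\lambda_0 > 0$ (dilations) and $\lambda_1 > 0$ (translations) are pinned down by the normalization that extinction occurs at $x_0$ at time $T$. For the stable modes $\ell \geq 2$ with $\lambda_\ell < 0$, I would apply a parabolic backward-uniqueness argument — for example an Almgren-type frequency monotonicity or a Carleman inequality in the spirit of Escauriaza-Fernandez-Vessella — to conclude that any solution of the parabolic equation decaying faster than every exponential as $s \to \infty$ must vanish identically. Once $h \equiv 0$, the level sets of $t$ near $x_0$ are spheres centered at $x_0$; propagating backward under MCF, every level set of $t$ is a sphere centered at $x_0$, so $t$ is radial and equal to $t_B$ on $\Omega$, and $\partial\Omega = \{t = 0\}$ is the sphere $\{|x - x_0| = \sqrt{2nT}\}$.

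I expect the main obstacle to be the backward-uniqueness step. Because the negative eigenvalues of $L$ accumulate at $-\infty$, exciting higher and higher spherical harmonics in the linearization can in principle reproduce any prescribed super-exponential decay rate, so the collapse $h \equiv 0$ is not an ODE-level phenomenon: it requires a quantitative Carleman or frequency estimate that controls all modes simultaneously and absorbs the superquadratic nonlinearity $Q$, which — even though it is small whenever $h$ is — couples the modes in a way that must be treated carefully.
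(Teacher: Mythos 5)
Your overall strategy matches the paper's: translate the infinite-order agreement at the extinction point into super-exponential decay of the rescaled flow's graph function over $\mathbf{S}^n$, and then show that such decay forces the graph function to vanish identically. Your preliminary computation showing that $u = t - t_B$ satisfies $(I-\nu\nu^T):\nabla^2 u = 0$ is a correct and pleasing observation, though neither you nor the paper end up using it directly; the paper instead works from the outset with the graph function $u(\cdot,s)$ of $\Sigma_s$ over $\mathbf{S}^n$ satisfying $\partial_s u = \Delta u + u + N(u,\nabla u,\nabla^2 u)$ with $N$ vanishing to second order. Your version of Lemma \ref{conv} is essentially the paper's, and your reduction is sound.

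The genuine divergence is in the decisive step, and here I think you have misjudged the nature of the problem. You write that ``the collapse $h\equiv 0$ is not an ODE-level phenomenon'' because the negative eigenvalues of the linearization accumulate at $-\infty$ and could, you suggest, manufacture any super-exponential decay rate. That concern does not survive scrutiny even for the linear equation: if $h_0 = \sum_\ell a_\ell Y_\ell$ with $a_{\ell_0}\ne 0$ and $h(s) = \sum_\ell a_\ell e^{\lambda_\ell s}Y_\ell$, then $\|h(s)\|_{L^2} \geq |a_{\ell_0}| e^{\lambda_{\ell_0}s}$, a fixed exponential rate; high-frequency excitation decays \emph{faster}, not slower, and cannot hide the lowest nonzero mode. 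So the linear statement is elementary spectral theory, and the real work is absorbing the nonlinearity. The paper does this via the variation-of-constants formula applied separately to $\Pi_k u$ and $(1-\Pi_k)u$, using the super-exponential decay to kill the boundary term at $s_1 = \infty$ on the finite-dimensional unstable/neutral range. This gives $e^{\lambda_k(s-s_0)}\|u(s)\|_r \leq \|\Pi_k u(s_0)\|_r + \int_{s_0}^\infty e^{\lambda_k(t-s_0)}\|N(u(t))\|_r\,\mathrm dt$, and the quadratic estimate $\|N(u)\|_r \leq C\|u\|_r\|u\|_{r+4}$ (valid for $r > n/2+1$) combined with the integrability of $\|u\|_{r+4}$ closes the loop by an absorption argument and sending $k\to\infty$. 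No Carleman inequality, no frequency function. Reaching for Escauriaza--Fernandez--Vessella type machinery is not wrong in spirit, but it is heavier than needed, and as you yourself note you do not actually know how to run that argument against the fully nonlinear coupling. In short: your setup is right, your proposed key tool is avoidable, and your stated obstruction to the simpler argument is illusory.
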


\emph{Remarks.} 

1. Let $\Omega$ be a mean-convex domain and suppose $t\colon \Omega\to\mathbb R$ satisfies (\ref{lse}). We already mentioned that $t$ is twice differentiable on its domain (this was proved in \cite{CoMi16}).\footnote{It was shown in \cite{Wh00} (Theorem 1.2) and \cite{Wh11} that any tangent flow of a smooth mean convex mean curvature flow is a generalized cylinder. From this one can figure out what the Hessian of the arrival time function must be if it exists. The remaining issue was to show that the Hessian exists, which is equivalent to the problem of uniqueness of tangent flows. This was solved in \cite{CoMi15}.} The Hessian of $t$ at a critical point $x_0$ is then
\begin{align*}
	\nabla^2 t(x_0) &= - \frac{1}{2k}P,
\end{align*}
where $k$ is an integer between $1$ and $n$ and $P$ is a projection onto a $n-k$ dimensional hyperplane. See \cite{CoMi18}. The hypothesis of Theorem \ref{unique-cont} implies that $k=n.$ In this case $t$ is actually $C^2$ in a neighborhood of the maximum, and the corresponding mean curvature flow becomes extinct at $x_0$ in such a way that the rescaled mean curvature flow converges to a round sphere. In particular, the isolated point $\{x_0\}$ is a connected component of the level set $\{x\colon t(x) = T\},$ and nearby level sets are convex. Because a mean curvature flow cannot coincide with a sphere at any time unless it is a shrinking sphere, it is therefore sufficient to prove Theorem \ref{unique-cont} in case $\Omega$ is a convex domain.

2. The hypothesis (\ref{asympt}) for a fixed $N>2$ likely implies (for a solution to (\ref{lse})) that $t$ is $C^{N-1}$ near $x_0.$ This is known in case $N=3$ (Theorem 6.1 of \cite{Hu93}) and in case $N=4$ (Corollary 5.1 of \cite{Se08}). This fact is not required for our result, however, and we do not investigate it here. Of course, Theorem \ref{unique-cont} implies that $t$ is analytic if it satisfies (\ref{asympt}) for \emph{all} $N.$

\section{Proof of Theorem \ref{unique-cont}}

To prove Theorem \ref{unique-cont}, we relate the asymptotic behavior of $t$ near its maximum to the behavior of the corresponding mean curvature flow near its singularity. We show that the hypothesis (\ref{asympt}) for all $N>2$ implies that the rescaled mean curvature flow converges to a stationary round sphere at a rate that is faster than any exponential, and then that this cannot happen unless the rescaled flow is identically equal to the stationary sphere. As mentioned in the remarks following the theorem, it is sufficient to prove Theorem \ref{unique-cont} in case $\Omega$ is a convex domain, and we restrict attention to this case from here on.

We now briefly describe the rescaled mean curvature flow for a convex surface. Let $\Omega\subset\mathbb R^{n+1}$ be a bounded convex region and let $\{M_\tau\}_{\tau\in [0,T]}$ be the mean curvature flow starting from $M_0 = \partial \Omega.$ By a theorem of Huisken, \cite{Hu84}, the flow $M_t$ shrinks down to a single point $x_0$ (that is, $M_T =\{x_0\}$) and the translated and rescaled flow $(T-\tau)^{-1/2}(M_\tau- x_0)$ converges in $C^k,$ for any $k,$ to the round sphere $\bf{S}^n$ of radius $(2n)^{1/2}$ centered at the origin.

The following rescaling procedure is standard in the study of mean curvature flow.\footnote{See the first sections of \cite{CoMi12} for an excellent introduction and overview.} We perform the substitution $s = -\log{(T-\tau)}$ and denote by $\Sigma_s$ the surface $(T-\tau)^{-1/2}(M_\tau - x_0).$ The $1$-parameter family $\{\Sigma_s\}_{s\in [-\log{T},\infty)}$ is said to be a \emph{rescaled mean curvature flow}. By rescaling the surfaces $M_\tau,$ we can arrange that the time $T$ of extinction is $1,$ and therefore that the rescaled flow is defined for $s\geq 0.$ Its position vector $y(s)$ satisfies the equation
\begin{align*}
	\frac{\mathrm d y}{\mathrm ds} \bdot N(y,s) = - H(y,s) + \frac{1}{2}y\bdot N(y,s),
\end{align*}
where $H$ and $N$ are the scalar mean curvature and outer unit normal, respectively, for $\Sigma_s.$

Let $\bf{n}(x) = x/|x| = x/(2n)^{1/2}$ be the outer unit normal to the sphere $\bf{S}^n$ at the point $x.$ Because $\Sigma_s$ converges in $C^k$ to $\bf{S}^n$ as $s\to\infty,$ the surface $\Sigma_s$ is a normal graph over $\bf{S}^n$ for sufficiently large $s$: there exists $s_0\geq0$ and a scalar function $u\colon \bf{S}^n\times [s_0,\infty)\to\mathbb R$ for which
\begin{align}
	\Sigma_s &= \{x+ u(x,s)\bf{n}(x)\colon x\in\bf{S}^n\}. \label{defn.u}
\end{align}
The function $u$ is uniquely determined as the solution of a quasilinear parabolic PDE on the sphere. The parabolic evolution has the constantly zero function for a solution (a stationary state), corresponding to the fact that the stationary sphere $\bf{S}^n$ satisfies the rescaled MCF equation. The linearization of the parabolic operator at the zero solution is $\partial_s - \Delta - 1,$ where $\Delta$ is the Laplacian on the sphere. The following lemma converts Theorem \ref{unique-cont} to a problem about functions $u$ satisfying this parabolic equation: if $t$ is a solution to the level set equation satisfying the assumption (\ref{asympt}) for all $N,$ then the corresponding solution to rescaled MCF converges to $\bf{S}^n$ faster than any exponential. The problem is then reduced to showing that this cannot happen unless the rescaled MCF is identically $\bf{S}^n.$

Note we may assume by translating everything that $x_0 = 0.$ Thus we state the lemma for the case when the mean curvature flow becomes extinct at the origin.

\medskip
\begin{lemma}
	\label{conv} Suppose $\Omega\subset\mathbb R^{n+1}$ is a convex domain and $t\colon \Omega \to\mathbb R$ solves (\ref{lse}) and attains its maximum $T$ at the origin. Suppose
	\begin{align}
		t(x) = T - \frac{|x|^2}{2n} +O\left( |x|^N\right) \label{asympt'}
	\end{align}
	as $x\to 0,$ for every integer $N> 2.$ Then for any integer $k\geq 0,$ the rescaled MCF $\{\Sigma_s\}$ that $t$ defines converges to $\bf{S}^n$ in the Sobolev space $H^k$ faster than any exponential. 
\end{lemma}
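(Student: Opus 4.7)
The plan is to translate the infinite-order hypothesis \eqref{asympt'} on $t$ into faster-than-exponential decay of the normal graph function $u(\cdot,s)$ representing $\Sigma_s$ over $\bf{S}^n$. First I would obtain $L^\infty$ decay of $u$ directly from the rescaling substitution $x = e^{-s/2}y$, and then bootstrap this to every Sobolev norm by interpolating against the uniform $C^k$ bounds that Huisken's smooth convergence already provides. The heart of the argument is Step 1 below; Step 2 is essentially routine interpolation.

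For Step 1, assume $T = 1$ after rescaling, and note that $y\in\Sigma_s$ holds if and only if $t(e^{-s/2}y) = 1 - e^{-s}$. Substituting \eqref{asympt'} with $x = e^{-s/2}y$ and multiplying by $e^s$ gives
\[ \frac{|y|^2}{2n} - 1 = O\!\left( e^{-(N/2 - 1)s}\, |y|^N \right) \]
for every $N > 2$. Writing $y = (1 + u(x,s)/\sqrt{2n})\,x$ for $x\in\bf{S}^n$, one has $|y|^2 - 2n = 2\sqrt{2n}\,u + u^2$, and since Huisken's convergence keeps $|y|$ bounded and $u$ uniformly small for $s$ large, this forces
\[ \|u(\cdot,s)\|_{L^\infty(\bf{S}^n)} \leq C_N\, e^{-(N/2 - 1)s} \]
for every $N$. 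The main obstacle lies precisely here: recognizing that the parabolic rescaling converts the zeroth-order infinite-order asymptotic of the arrival time at its maximum into super-exponential smallness of the radial perturbation of $\Sigma_s$ away from $\bf{S}^n$. Once the substitution is in hand, the rest is a short calculation.

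For Step 2, Huisken's theorem provides uniform bounds $\|u(\cdot,s)\|_{H^k(\bf{S}^n)} \leq M_k$ for each $k$. Gagliardo--Nirenberg interpolation on the compact manifold $\bf{S}^n$ then gives, for $0 \leq j < k$,
\[ \|u(\cdot,s)\|_{H^j} \leq C\, \|u(\cdot,s)\|_{L^2}^{1 - j/k}\, \|u(\cdot,s)\|_{H^k}^{j/k} \leq C_k M_k^{j/k}\, \|u(\cdot,s)\|_{L^\infty}^{1 - j/k}. \]
Given $j$ and $\lambda>0$, choosing $N > 2\lambda + 2$ and then $k$ so large that $(N/2-1)(1-j/k) > \lambda$, and combining with the bound from Step 1, yields $\|u(\cdot,s)\|_{H^j} \leq C_\lambda e^{-\lambda s}$, which is the faster-than-exponential decay claimed.
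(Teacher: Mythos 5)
Your proposal is correct and follows essentially the same route as the paper: in both, the infinite-order expansion of $t$ is converted via the parabolic rescaling into $\|u(\cdot,s)\|_{L^\infty} = O(e^{-(N/2-1)s})$ for each $N$, and this is then upgraded to every Sobolev norm by interpolating against the uniform $H^k$ bounds coming from Huisken's smooth convergence. The paper phrases the Step 1 computation by expanding $(T-t(x))^{-1/2}x$ rather than by the direct substitution $x = e^{-s/2}y$, and it uses the slightly simpler interpolation $\|u\|_{H^k}\leq\|u\|_{L^2}^{1/2}\|u\|_{H^{2k}}^{1/2}$ in Step 2, but these are cosmetic differences.
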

\begin{proof}
We use the following fact: if $\{\Sigma_s\} = \{x+ u(x,s)\bf{n}(x)\colon x\in\bf{S}^n\}$ is a rescaled MCF converging as $s\to\infty$ to the sphere in $L^2,$ that is, for which $u$ converges to zero in $L^2(\bf{S}^n),$ then $u$ is bounded in $H^k(\bf{S}^n)$ for every $k\geq 1$ (actually $u$ converges exponentially in $H^k$). On account of the interpolation inequalities 
\begin{align*}
	\|u\|_{H^k}\leq \|u\|_{L^2}^{1/2} \|u\|_{H^{2k}}^{1/2},
\end{align*}
this reduces the proof of the lemma to showing that the function $u$ converges to zero faster than any exponential in $L^2(\bf{S}^n).$ 

Now we show that (\ref{asympt'}) holding for all $N$ implies that $u(s)$ converges to zero faster than any exponential in $L^\infty(\bf{S}^n),$ hence in $L^2(\bf{S}^n).$ 

First observe: if (\ref{asympt'}) holds for some $N,$ then 
\begin{align*}
	|x|^2 = (2n)(T-t(x)) + O\left((T-t(x))^{N/2}\right)
\end{align*}
as $T-t\to0,$ for the same $N.$ Next, use this to write
\begin{align*}
	(T-t(x))^{-1/2}x  &= \left( |x|^2/(2n) + O\left((T-t(x))^{N/2}\right) \right)^{-1/2}x  \\
	&= (2n)^{1/2} \frac{x}{|x|} + O\left( \frac{(T- t(x))^{N/2}}{|x|^2}\right) \frac{x}{|x|} \\
	&= (2n)^{1/2} \frac{x}{|x|} + O\left((T-t(x))^{N/2-1}\right)\frac{x}{|x|}.
\end{align*}
This says exactly that if $s = -\log{(T-\tau)}$ and
\begin{align*}
	\{y+ u(y,s) \bf{n}(y)\colon y\in \bf{S}^n\} = \Sigma_s = \left\{ (T-\tau)^{-1/2} x\colon t(x) = \tau\right\}
\end{align*}
is the rescaled MCF defined by $t$ (it will be a graph over $\bf{S}^n$ for $\tau$ sufficiently close to $T$), then
\begin{align*}
	|u(y,s)| = O\left((T-\tau)^{N/2 - 1}\right) = O\left(e^{-(N/2 - 1) s}\right)
\end{align*}
as $s \to\infty.$ Therefore if (\ref{asympt'}) holds for all $N,$ then $u$ converges to zero faster than any exponential in $L^\infty(\bf{S}^n).$ Since $\bf{S}^n$ has finite volume, $u$ converges to zero faster than any exponential in $L^2(\bf{S}^n)$ as well.
\end{proof}

Having established Lemma \ref{conv}, Theorem \ref{unique-cont} is a consequence of the following theorem.

\medskip
\begin{theorem}
	\label{rapid} Let $r>n/2+1$ be an integer. Suppose $\Sigma_s = \{y+u(y,s)\bf{n}(y)\colon y\in \bf{S}^n\},$ $s\geq 0,$ is a rescaled MCF and suppose that it converges to $\bf{S}^n$ in $H^{r}(\bf{S}^n)$ faster than any exponential in the sense that
	\begin{align*}
		\lim_{s\to\infty} e^{\sigma s}\| u(\cdot,s)\|_{H^{r}(\bf{S}^n)} = 0
	\end{align*}
	for all $\sigma>0.$ Then $u$ is identically zero and $\Sigma_s = \bf{S}^n$ for all $s.$
\end{theorem}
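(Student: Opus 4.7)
The plan is to establish a Poon-type log-convexity (frequency) inequality for the $L^2$ norm of $u$, which will force it to decay at most exponentially unless $u$ is identically zero. First, write the rescaled MCF equation for the graph function $u$ as
\[
\partial_s u = L u + \mathcal{Q}(u, \nabla u, \nabla^2 u), \qquad L := \Delta_{\bf{S}^n} + 1,
\]
where $\mathcal{Q}$ vanishes to at least second order at $0$ and $L$ is self-adjoint on $L^2(\bf{S}^n)$ with purely discrete spectrum $\{1-k(k+n-1)\}_{k\geq 0}$. Suppose for contradiction that $u$ is not identically zero; then there is some $s_0\geq 0$ with $I(s_0)>0$, where $I(s) := \|u(\cdot,s)\|_{L^2(\bf{S}^n)}^2$.

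The hypothesis $r > n/2 + 1$ and the Sobolev embedding $H^r(\bf{S}^n)\hookrightarrow C^1(\bf{S}^n)$ give $\|u(\cdot,s)\|_{C^1}\to 0$ faster than any exponential. Combined with the product structure of $\mathcal{Q}$ (whose summands look like $u\cdot \nabla^2 u$, $(\nabla u)^2$, etc.) and the elliptic estimate $\|u\|_{H^2}\lesssim \|Lu\|_{L^2}+\|u\|_{L^2}$, this yields a bound
\[
\|\mathcal{Q}(\cdot, s)\|_{L^2} \leq \epsilon(s)\bigl(\|u(\cdot,s)\|_{L^2} + \|Lu(\cdot,s)\|_{L^2}\bigr),
\]
with $\epsilon(s)\to 0$ as $s\to\infty$. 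Following Poon, differentiate $\log I$ twice, using self-adjointness of $L$:
\begin{align*}
\frac{d}{ds}\log I &= \frac{2\langle u, Lu\rangle + 2\langle u, \mathcal{Q}\rangle}{I},\\
\frac{d^2}{ds^2}\log I &= \frac{4\bigl(\|Lu\|_{L^2}^2\,I - \langle u, Lu\rangle^2\bigr)}{I^2} + \mathcal{E}(s),
\end{align*}
where $\mathcal{E}(s)$ collects the nonlinear corrections. The leading term is nonnegative by Cauchy--Schwarz, and the error bound above lets us estimate $|\mathcal{E}(s)|\leq \epsilon(s)\bigl(|(\log I)'| + 1\bigr)$ for $s$ large. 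Integrating this differential inequality for $(\log I)'$ from $s_0$ onwards (the $I$-positive maximal interval is necessarily $[s_0,\infty)$ because the resulting lower bound stays strictly positive) shows $(\log I)'$ is bounded below, hence $I(s)\geq I(s_0)e^{-C(s-s_0)}$ for $s\geq s_0$. This contradicts the hypothesis that $I(s)$ decays faster than every exponential, so $u\equiv 0$.

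The main obstacle I anticipate is controlling the error term $\mathcal{E}$ in $(\log I)''$ precisely enough that it can be absorbed into the nonnegative Cauchy--Schwarz leading term. Expanding $\mathcal{E}$ produces cross terms such as $\langle Lu,\mathcal{Q}\rangle/I$, $\|\mathcal{Q}\|_{L^2}^2/I$, $\langle u,\partial_s \mathcal{Q}\rangle/I$, and further interactions with $(\log I)'$; the first two are handled by the $L^2$ bound on $\mathcal{Q}$ above, but the third requires a similar bound on $\partial_s u$ in $C^1$, which in turn requires feeding the super-exponential $H^r$ smallness through standard parabolic interior regularity to obtain super-exponential smallness of $\partial_s u$ in $C^1$ as well. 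Once these pieces are in place, the absorption of $\mathcal{E}$ and the integration of the resulting differential inequality for $(\log I)'$ are routine.
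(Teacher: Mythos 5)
Your frequency-function strategy is genuinely different from the paper's: the paper decomposes $u$ against the spectrum of $-\Delta-1$, derives a variation-of-constants representation in $H^r(\bf{S}^n)$ (integrating the high modes forward and the low modes backward from infinity, which super-exponential decay makes legitimate), absorbs the nonlinear contribution after choosing $s_0$ so large that $C\int_{s_0}^\infty\|u\|_{H^{r+4}}\,\mathrm dt\leq 1/2$, and then lets the spectral cutoff $k\to\infty$. The key structural point the paper exploits is that in $H^r$ the nonlinearity obeys $\|N(u)\|_{H^r}\leq C\|u\|_{H^r}\|u\|_{H^{r+4}}$, i.e.\ it is controlled by $\|u\|_{H^r}$ itself times a time-dependent integrable coefficient, with \emph{no} loss of derivatives relative to the norm one is tracking.

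Your $L^2$ log-convexity argument does not have that feature, and that is where the gap is. Because $\mathcal Q$ contains $\operatorname{trace}(B(u,\nabla u)\nabla^2 u)$, the best pointwise-in-time bound available is, as you write, $\|\mathcal Q\|_{L^2}\leq\epsilon(s)\bigl(\|u\|_{L^2}+\|Lu\|_{L^2}\bigr)$; this costs two derivatives. When you then estimate the cross term $\langle Lu,\mathcal Q\rangle/I$ in $(\log I)''$, you get $\epsilon(s)\bigl(\|Lu\|_{L^2}/\|u\|_{L^2}+\|Lu\|^2_{L^2}/\|u\|^2_{L^2}\bigr)$, and $\|Lu\|^2_{L^2}/\|u\|_{L^2}^2$ is comparable to the \emph{square} of the frequency (take $u$ a high eigenmode to see this). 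Hence the correct error bound is $|\mathcal E(s)|\lesssim\epsilon(s)\bigl((\log I)'{}^2+1\bigr)$, not $\epsilon(s)\bigl(|(\log I)'|+1\bigr)$ as you assert; the resulting inequality for $g:=-(\log I)'$ is Riccati, $g'\leq\epsilon(s)(g^2+1)$, and integrating it does not by itself rule out $g\to\infty$: one only gets $g(s_0)\gtrsim 1/\int_{s_0}^\infty\epsilon$, which is entirely consistent with $\|u\|_{L^2}$ decaying faster than $\|u\|_{H^2}$ and hence with super-exponential decay. Nothing in the hypothesis bounds $\|u\|_{H^2}/\|u\|_{L^2}$ from above, so the argument does not close. (The same loss-of-derivatives issue makes the term $\langle u,\partial_s\mathcal Q\rangle/I$ genuinely delicate: $\partial_s\mathcal Q$ contains $\nabla^2\partial_s u$ and hence four derivatives of $u$, and controlling it again produces frequency-squared errors, not the linear errors your absorption step needs.) A natural fix, which would bring you much closer to the paper's mechanism, is to run the log-convexity argument for $I_r(s):=\|u(s)\|_{H^r}^2$ using Lemma~\ref{nonlin-bd}, so that the nonlinearity is bounded by $\|u\|_{H^r}$ itself with an integrable coefficient and the Agmon--Nirenberg absorption is linear rather than quadratic in the frequency.
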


The equation satisfied by $u$ in order for the normal graph $\Sigma_s = \{y+u(y,s)\bf{n}(y)\colon y\in \bf{S}^n\}$ to evolve by rescaled MCF can be written in the form
\begin{align}
	\partial_s u &= \Delta u + u + N(u,\nabla u,\nabla^2u) \label{rMCF'}
\end{align}
where $\Delta$ is the Laplacian on $\bf{S}^n$ and $N$ is a nonlinear term of the form
\begin{align}
	N(u,\nabla u,\nabla^2 u) = f(u,\nabla u) + \on{trace}(B(u,\nabla u)\nabla^2u), \label{nonlin-error}
\end{align}
where $f$ and $B$ are smooth, $B(0,0) = f(0,0) = \mathrm df(0,0) = 0.$ In other words, $N$ vanishes up to quadratic error at zero. 

The important feature of this equation is that the linear operator $\partial_s - \Delta -1$ gives a good approximation to the nonlinear operator in (\ref{rMCF'}): in a Sobolev space $H^r(\bf{S}^n)$ of high enough order $r,$ a function $u$ the normal graph of which evolves by rescaled MCF satisfies
\begin{align*}
	\|(\partial_s - \Delta - 1)u\|_{H^r(\bf{S}^n)} \leq C \|u\|_{H^{r+1}(\bf{S}^n)} \|u\|_{H^{r+2}(\bf{S}^n)}.
\end{align*}
This bound is what implies that if $u$ converges to zero as $s\to+\infty,$ it must do so at an exponential rate (unless it is identically zero). We state this bound as a lemma. This is a textbook result, but the proof is also written in a companion paper.

\medskip
\begin{lemma}
\label{nonlin-bd}
	Suppose $N$ is a smooth function satisfying (\ref{nonlin-error}). If $r>n/2+1$ is an integer and $\|u\|_{H^r(\bf{S}^n)}\leq 1,$ then 
	\begin{align*}
		\|N(u,\nabla u,\nabla^2 u)\|_{H^r(\bf{S}^n)} \leq C \|u\|_{H^{r+1}(\bf{S}^n)} \|u\|_{H^{r+2}(\bf{S}^n)}
	\end{align*}
	for some constant $C$ depending only on $N$ and $r.$
\end{lemma}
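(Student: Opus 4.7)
The plan is to reduce the estimate to three standard ingredients: Taylor's theorem to extract the right number of factors of $u$ (and its derivatives) from the smooth nonlinearities, the Banach algebra property of $H^r(\bf{S}^n)$ when $r>n/2$, and the Moser composition estimate for smooth functions of Sobolev functions. Since $r>n/2+1$, Sobolev embedding gives $H^{r-1}(\bf{S}^n)\hooklongrightarrow L^\infty(\bf{S}^n)$, so the hypothesis $\|u\|_{H^r}\leq 1$ yields a uniform bound on $(u,\nabla u)$ in $L^\infty$, which will be used to control the smooth ``outer'' factors coming from $f$ and $B$.

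First I would use the vanishing conditions on $f$ and $B$ at the origin. Since $f(0,0)=0$ and $\mathrm df(0,0)=0$, Taylor's theorem with integral remainder gives
\begin{align*}
    f(p,q) = \sum_{|\alpha|+|\beta|=2} p^\alpha q^\beta \, \tilde f_{\alpha\beta}(p,q),
\end{align*}
with each $\tilde f_{\alpha\beta}$ smooth. Similarly, $B(0,0)=0$ lets me write $B(p,q) = p\,\tilde B_1(p,q) + q\,\tilde B_2(p,q)$ with smooth $\tilde B_i$. Substituting $p=u$, $q=\nabla u$, the nonlinearity $N(u,\nabla u,\nabla^2u)$ becomes a finite sum of two types of terms: (i) $v_1 v_2\, G(u,\nabla u)$ where each $v_i\in\{u,\partial_1 u,\ldots,\partial_n u\}$, and (ii) $v_1\, G(u,\nabla u)\,\partial_i\partial_j u$ where $v_1$ is $u$ or a first derivative; here $G$ denotes a generic smooth function of $(u,\nabla u)$.

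Next I would bound each such term in $H^r(\bf{S}^n)$ by the algebra property, which gives $\|gh\|_{H^r}\leq C\|g\|_{H^r}\|h\|_{H^r}$ for $r>n/2$, together with the Moser composition estimate: for smooth $G$ and any $v$ with $\|v\|_{L^\infty}$ bounded, $\|G(v)\|_{H^r}\leq C(\|v\|_{L^\infty})(1+\|v\|_{H^r})$. Applying this to $v=(u,\nabla u)$, whose $L^\infty$ norm is controlled by $\|u\|_{H^r}\leq 1$ since $r>n/2+1$, and whose $H^r$ norm is controlled by $\|u\|_{H^{r+1}}$, yields a uniform bound $\|G(u,\nabla u)\|_{H^r}\leq C$. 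Each term of type (i) is then bounded by $C\|v_1\|_{H^r}\|v_2\|_{H^r}\leq C\|u\|_{H^{r+1}}^2$, and each term of type (ii) by $C\|v_1\|_{H^r}\|\partial_i\partial_j u\|_{H^r}\leq C\|u\|_{H^{r+1}}\|u\|_{H^{r+2}}$. Since $\|u\|_{H^{r+1}}\leq\|u\|_{H^{r+2}}$, the first estimate is absorbed into the second, giving the claimed bound.

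The only nontrivial piece is the Moser composition estimate on a compact manifold for vector-valued arguments; this is standard and can be proved by induction on $r$ using the chain rule and the algebra property, so I expect no real obstacle beyond keeping track of constants and the range where the smooth functions $\tilde f_{\alpha\beta}$, $\tilde B_i$, $G$ remain defined, which is guaranteed by the $L^\infty$ bound coming from $\|u\|_{H^r}\leq 1$.
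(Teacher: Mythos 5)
The paper does not include its own proof of Lemma~\ref{nonlin-bd}; it is cited as a textbook result proved in a companion paper, so I can only assess the internal correctness of your argument. Your overall strategy (Taylor expansion with integral remainder to extract the quadratic vanishing, then Sobolev product and Moser composition estimates) is the right one, but there is a genuine gap at the step where you claim a uniform bound on the ``outer'' smooth factor.

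Specifically, you write that the Moser estimate $\|G(v)\|_{H^r}\leq C(\|v\|_{L^\infty})(1+\|v\|_{H^r})$, applied with $v=(u,\nabla u)$, ``yields a uniform bound $\|G(u,\nabla u)\|_{H^r}\leq C$.'' It does not. You correctly note that $\|(u,\nabla u)\|_{H^r}$ is controlled by $\|u\|_{H^{r+1}}$, and the hypothesis only gives $\|u\|_{H^r}\leq 1$, not a bound on $\|u\|_{H^{r+1}}$. So the Moser estimate gives $\|G(u,\nabla u)\|_{H^r}\leq C\l(1+\|u\|_{H^{r+1}}\r)$, which is \emph{not} uniformly bounded. (Concretely, if $\tilde f_{\alpha\beta}(p,q)$ has a linear term in $q$, then $G(u,\nabla u)$ contains $\nabla u$ and $\|\nabla u\|_{H^r}\sim\|u\|_{H^{r+1}}$.) Carrying this factor through your type (i) and (ii) estimates via the plain algebra inequality produces $C\|u\|_{H^{r+1}}^2(1+\|u\|_{H^{r+1}})$ and $C\|u\|_{H^{r+1}}\|u\|_{H^{r+2}}(1+\|u\|_{H^{r+1}})$, which is one order too many in $u$.

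The fix is to replace the pure algebra inequality $\|gh\|_{H^r}\leq C\|g\|_{H^r}\|h\|_{H^r}$ with the tame (Moser) product estimate $\|gh\|_{H^r}\leq C\l(\|g\|_{L^\infty}\|h\|_{H^r}+\|g\|_{H^r}\|h\|_{L^\infty}\r)$, valid for $r>n/2$, and its multilinear version. Then in each term the $H^r$ norm lands on exactly one factor and all other factors are taken in $L^\infty$, where $\|G(u,\nabla u)\|_{L^\infty}\leq C$ is indeed uniform (this is the step where the hypothesis $\|u\|_{H^r}\leq 1$ and $r>n/2+1$ actually enter), and $\|u\|_{L^\infty},\|\nabla u\|_{L^\infty}\leq C\|u\|_{H^r}\leq C$. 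For type (ii) you additionally need $\|\nabla^2 u\|_{L^\infty}\leq C\|u\|_{H^{r+1}}$, which holds since $r-1>n/2$. The one place the $H^r$ norm can land on $G$, it comes multiplied by at least $\|u\|_{L^\infty}\|\nabla u\|_{L^\infty}\lesssim\|u\|_{H^r}^2$ (type (i)) or by $\|u\|_{L^\infty}\|\nabla^2 u\|_{L^\infty}\lesssim\|u\|_{H^r}\|u\|_{H^{r+1}}$ (type (ii)); using $\|G\|_{H^r}\leq C(1+\|u\|_{H^{r+1}})$, $\|u\|_{H^r}\leq 1$, and the monotonicity $\|u\|_{H^r}\leq\|u\|_{H^{r+1}}\leq\|u\|_{H^{r+2}}$, every resulting term is bounded by $C\|u\|_{H^{r+1}}\|u\|_{H^{r+2}}$. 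With this substitution your proof is sound; as written, it is not.
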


Using Lemma \ref{nonlin-bd}, we now prove Theorem \ref{rapid}.

\begin{proof}[Proof of Theorem \ref{rapid}]
	Throughout the proof, we abbreviate the $H^r(\bf{S}^n)$ norm $\|\cdot\|_{H^r(\bf{S}^n)}$ by $\|\cdot\|_r,$ we abbreviate $u(\cdot,t)$ by $u(t),$ and we abbreviate the nonlinear error $N(u,\nabla u,\nabla^2 u)$ by $N(u).$  Let $\lambda_1<\lambda_2<\cdots$ be the eigenvalues of the operator $-\Delta - 1,$ and denote by $\Pi_k$ orthogonal projection onto the direct sum of eigenspaces corresponding to $\lambda_j$ with $j\geq k.$ 
	
	We will prove that a solution $u$ satisfying (\ref{rMCF'}) obeys, for each positive integer $k$ and each $s_0\geq 0,$ the inequality
\begin{align}
	e^{\lambda_k (s-s_0)}\|u(s)\|_r \leq \|\Pi_{k} u(s_0)\|_r + \int_{s_0}^\infty e^{\lambda_k (t-s_0)} \|N(u(t))\|_r\,\mathrm dt. \label{arrival.0th.1}
\end{align}
Lemma \ref{nonlin-bd} implies that there is a constant $C$ depending on $N$ with the property that
\begin{align*}
	\|N(u)\|_r \leq C\|u\|_{r+1}\|u\|_{r+2}\leq C\|u\|_{r+2}^2\leq C\|u\|_r\|u\|_{r+4}.
\end{align*}
The last inequality follows from Cauchy--Schwarz and integration by parts for example.

It then follows that
\begin{align*}
	e^{\lambda_k (s-s_0)}\|u(s)\|_r &\leq \|\Pi_{k} u(s_0)\|_r + C\int_{s_0}^\infty e^{\lambda_k (t-s_0)} \|u(t)\|_{r}\|u(t)\|_{r+4}\,\mathrm dt \\
	&\leq  \|\Pi_{k} u(s_0)\|_r + C\left(\sup_{t\geq s_0}e^{\lambda_k (t-s_0)} \|u(t)\|_r\right) \int_{s_0}^\infty \|u(t)\|_{r+4}\,\mathrm dt. 
\end{align*}
Taking the supremum over $s\geq s_0$ on the left side then gives
\begin{align*}
	\left(\sup_{t\geq s_0}e^{\lambda_k (t-s_0)} \|u(t)\|_r\right)\left(1 - C\int_{s_0}^\infty \|u(t)\|_{r+4}\,\mathrm dt\right)\leq \|\Pi_{k} u(s_0)\|_r
\end{align*}
for all $k.$ Because convergence to the sphere is necessarily exponential in $C^k$ for every $k,$ the $H^{r+4}(\bf{S}^n)$ norm $\|u(t)\|_{r+4}$ is integrable (as are all other $H^s$-norms), and therefore we can choose $s_0$ so large that
\begin{align}
	C\int_{s_0}^\infty \|u(t)\|_{r+4}\,\mathrm dt\leq 1/2. \label{small-int}
\end{align}
Moreover, we choose $s_0$ to be the least nonnegative number for which (\ref{small-int}) holds. For this $s_0$ we obtain
\begin{align*}
	\sup_{t\geq s_0}e^{\lambda_k (t-s_0)} \|u(t)\|_r\leq 2 \|\Pi_{k} u(s_0)\|_r
\end{align*}
for all positive integers $k,$ and since the right side vanishes in the limit $k\to\infty$ and the left side is non-decreasing in $k$ it follows that $\|u(t)\|_r = 0$ for $t\geq s_0.$ 

Now we show that $s_0=0.$ Since $\|u(t)\|_r = 0$ for $t\geq s_0,$ we also have $\|u(t)\|_{r+4} = 0$ for $t\geq s_0.$ Consequently,
\begin{align*}
	C\int_{s_0}^\infty \|u(t)\|_{r+4}\,\mathrm dt = 0.
\end{align*}
If $s_0>0,$ then it cannot possibly be the smallest positive number for which (\ref{small-int}) holds, and we arrive at a contradiction. Thus $u$ is in fact identically zero for all $s\geq 0.$

Therefore it is enough to prove (\ref{arrival.0th.1}). Write $L = \Delta+1$ for brevity. We will briefly explain how the assumption that $u$ converges to zero faster than any exponential leads to the following representation formula:
\begin{align*}
	e^{\lambda_k (s-s_0)} u(s) & = e^{(\lambda_k + L)(s-s_0)} \Pi_{k} u(s_0)   + \int_{s_0}^s e^{(\lambda_k + L)(s-t)} e^{\lambda_k(t-s_0)} \Pi_{k} N(u(t))\,\mathrm dt \\
	&\quad  - \int_s^\infty e^{(\lambda_k + L)(s-t)}e^{\lambda_k (t-s_0)}(1-\Pi_{k}) N(u(t))\,\mathrm dt,
\end{align*}
 Notice that $\lambda_k +L$ is non-positive definite on the range of $\Pi_{k}$ and positive definite on the range of $1-\Pi_{k}.$ Thus (\ref{arrival.0th.1}) follows by simply taking the $H^r$ norm of both sides and applying the triangle inequality repeatedly to the right side. 
 
 The representation can be derived from the variation of constants formula
\begin{align*}
	u(s_1)  &= e^{L(s_1-s)} u(s) + \int_{s}^{s_1} e^{L(s_1-t)} N(u(t))\,\mathrm dt,
\end{align*}
where $s_1\geq s,$ in the following way.\footnote{This is a standard trick in the construction of invariant manifolds for ODE.} Apply the projection $1-\Pi_k$ to both sides of the variation of constants formula. Apply $e^{-L(s_1-s)},$ which is a bounded operator on the finite-dimensional image of $1-\Pi_k,$ to both sides and rearrange to obtain
\begin{align*}
	(1-\Pi_k)u(s) &= e^{-L(s_1-s)} (1-\Pi_k)u(s_1) - \int_{s}^{s_1} e^{L(s-t)}(1-\Pi_k) N(u(t))\,\mathrm dt.
\end{align*}
Since $e^{-L(s_1 - s)}$ is bounded by $e^{\lambda_{k-1}} (s_1- s)$ on the image of $1-\Pi_k,$ we can send $s_1\to\infty$ and by our assumption that $u_1-u_2$ vanishes more rapidly than any exponential, the term $e^{-L(s_1 - s)} (1-\Pi_k) u(s_1)$ converges to zero. The integral, meanwhile, is absolutely convergent. Thus we obtain
\begin{align*}
	(1-\Pi_k)u(s) &= - \int_s^\infty e^{L(s-t)}(1-\Pi_k) N(u(t))\,\mathrm dt,
\end{align*}
for any $k.$ Substituting this equation into the variation of constants formula (with $s_1$ replaced by $s$ and $s$ replaced by $s_0$) gives the representation described above.
\end{proof}

\bibliography{/Users/nstrehlke/Documents/MathBibliography}

\providecommand{\bysame}{\leavevmode\hbox to3em{\hrulefill}\thinspace}
\providecommand{\MR}{\relax\ifhmode\unskip\space\fi MR }
\providecommand{\MRhref}[2]{%
  \href{http://www.ams.org/mathscinet-getitem?mr=#1}{#2}
}
\providecommand{\href}[2]{#2}
\begin{thebibliography}{CGG91}

\bibitem[Ber17]{Be17}
Jacob Bernstein, \emph{Asymptotic structure of almost eigenfunctions of drift
  laplacians on conical ends}, arXiv preprint arXiv:1708.07085 (2017).

\bibitem[CGG91]{ChGi91}
Yun~Gang Chen, Yoshikazu Giga, and Shun'ichi Goto, \emph{Uniqueness and
  existence of viscosity solutions of generalized mean curvature flow
  equations}, Journal of differential geometry \textbf{33} (1991), no.~3,
  749--786.

\bibitem[CMI12]{CoMi12}
Tobias~Holck Colding and William~P Minicozzi~II, \emph{Generic mean curvature
  flow i; generic singularities}, Annals of Mathematics \textbf{175} (2012),
  755--833.

\bibitem[CMI15]{CoMi15}
\bysame, \emph{Uniqueness of blowups and lojasiewicz inequalities}, Annals of
  Mathematics \textbf{182} (2015), no.~1, 221--285.

\bibitem[CMI16]{CoMi16}
\bysame, \emph{Differentiability of the arrival time}, Communications on Pure
  and Applied Mathematics \textbf{69} (2016), no.~12, 2349--2363.

\bibitem[CMI18]{CoMi18}
\bysame, \emph{Regularity of the level set flow}, Communications on Pure and
  Applied Mathematics \textbf{71} (2018), no.~4, 814--824.

\bibitem[ES91]{EvSp91}
Lawrence~C Evans and Joel Spruck, \emph{Motion of level sets by mean curvature.
  i}, Journal of Differential Geometry \textbf{33} (1991), no.~3, 635--681.

\bibitem[Hui84]{Hu84}
Gerhard Huisken, \emph{Flow by mean curvature of convex surfaces into spheres},
  Journal of Differential Geometry \textbf{20} (1984), no.~1, 237--266.

\bibitem[Hui93]{Hu93}
\bysame, \emph{Local and global behaviour of hypersurfaces moving by mean
  curvature}, Proceedings of the Summer Research Institute, American
  Mathematical Society, 1993, pp.~175--191.

\bibitem[OS88]{OsSe88}
Stanley Osher and James~A Sethian, \emph{Fronts propagating with
  curvature-dependent speed: algorithms based on hamilton-jacobi formulations},
  Journal of computational physics \textbf{79} (1988), no.~1, 12--49.

\bibitem[Ses08]{Se08}
Natasa Sesum, \emph{Rate of convergence of the mean curvature flow},
  Communications on Pure and Applied Mathematics \textbf{61} (2008), no.~4,
  464--485.

\bibitem[Whi00]{Wh00}
Brian White, \emph{The size of the singular set in mean curvature flow of
  mean-convex sets}, Journal of the American Mathematical Society \textbf{13}
  (2000), no.~3, 665--695.

\bibitem[Whi11]{Wh11}
\bysame, \emph{Subsequent singularities in mean-convex mean curvature flow},
  arXiv preprint arXiv:1103.1469 (2011).

\end{thebibliography}
\bibliographystyle{amsalpha}

\end{document}